\author{J. Vandehey}
\thanks{Email: \href{mailto:vandehe2@illinois.edu}{\nolinkurl{vandehe2@illinois.edu}}}
\title[Irrationality of Lambert series]{On an incomplete argument of Erd\H{o}s on the irrationality of Lambert series}
\date{\today}
\keywords{Lambert series, divisor function, $q$-logarithm}
\newtheorem{thm}{Theorem}[section]
\newtheorem{lem}[thm]{Lemma}
\newtheorem{prop}[thm]{Proposition}
\begin{document}

\maketitle

\begin{abstract}
We show that the Lambert series $f(x)=\sum d(n) x^n$ is irrational at $x=1/b$ for negative integers $b < -1$ using an elementary proof that finishes an incomplete proof of Erd\H{o}s.  
\end{abstract}

\section{Introduction}

Chowla \cite{chowla} conjectured that the functions 
\[
f(x) = \sum_{n=1}^\infty \frac{x^n}{1-x^n} \qquad \text{and} \qquad g(x) = \sum_{n=1}^\infty \frac{x^n}{1-x^n}(-1)^{n+1}
\]
are irrational at all rational values of $x$ satisfying $|x|<1$.  For such $x$ the above functions may be rewritten as 
\[
f(x) = \sum_{n=1}^\infty d(n) x^n \qquad \text{and} \qquad g(x) = \frac{1}{4}\sum_{n=1}^\infty r(n)x^n,
\]
where $d(n)$ is the number of divisors of $n$ and $r(n)$ is the number of representations of $n$ as a sum of two squares.

Erd\H{o}s \cite{erdos1} proved that for any integer $b>1$, the value $f(1/b)$ is irrational.  He did so by showing that $f(1/b)$ written in base $b$ contains arbitrary long strings of $0$'s without terminating on $0$'s completely.  If we take $b<-1$ to be a negative integer, then Erd\H{o}s' methods show by the same method that $f(1/b)$ in base $|b|$ contains arbitrary long strings of $0$'s; however, Erd\H{o}s claims without proof that showing it will not terminate on $0$'s can be done using similar methods.  It is not clear what method Erd\H{o}s intended, and in later papers (including his review of similar irrationality results \cite{erdos2}) Erd\H{o}s only refers to proving the case of positive $b$.

Since then, several proofs have been offered for the irrationality of the $b<-1$ case and far more general theorems besides.  Much credit is often given to Bezivin \cite{bezivin} and Borwein \cite{borwein} for proving the first major generalizations of these results; and other results can be often be found in the literature under the term of the $q$-analogue of the logarithm or, simply, the $q$-logarithm.  However, these results are proved using entirely different techniques than what Erd\H{o}s uses and leaves open the question of whether his method could have finished the proof.

Erd\H{o}s' method can be extended to the following stronger result with a virtually identical proof.

\begin{thm}\label{thm:erdos}
Let $b>1$ be a positive integer and $\mathcal{A}$ be any finite set of non-negative integers.  Then for any sequence $\{a_n\}_{n=1}^\infty$ taking values in $\mathcal{A}$ such that the sequence does not end on repeated $0$'s, we have that
\[
\sum_{n=1}^\infty d(n) \frac{a_n }{b^n}
\]
is irrational.
\end{thm}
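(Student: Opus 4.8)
The plan is to use the standard base-$b$ irrationality criterion: a real number is rational if and only if its base-$b$ expansion is eventually periodic, and if such an expansion additionally contains arbitrarily long blocks of $0$'s, then a complete period must consist of $0$'s, so the number is in fact a terminating base-$b$ fraction. Writing $\alpha=\sum_{n\ge1}d(n)a_n b^{-n}$, it therefore suffices to establish two facts: (A) the base-$b$ expansion of $\alpha$ contains arbitrarily long blocks of $0$'s, and (B) $\alpha$ is not a terminating fraction $K/b^M$. The key reformulation of (A) is the splitting $b^N\alpha=\sum_{n\le N}d(n)a_n b^{N-n}+U_N$, where the first sum is a non-negative integer and $U_N:=\sum_{m\ge1}d(N+m)a_{N+m}b^{-m}$; hence $\{b^N\alpha\}=\{U_N\}$, and whenever $\{U_N\}<b^{-k}$ the digits of $\alpha$ in positions $N+1,\dots,N+k$ all vanish. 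Thus (A) becomes the statement that for each $k$ there are infinitely many $N$ for which the tail $U_N$ lies just above an integer.

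For (A) I would first dispose of the sparse regime by averaging. Since $a_n$ is bounded and $\sum_{n\le x}d(n)\ll x\log x$, one has $\sum_{N\le X}\{U_N\}\le\sum_{N\le X}U_N\ll\sum_{n\le X}d(n)a_n+X^{o(1)}$; if $\tfrac1X\sum_{n\le X}d(n)a_n\to0$ this forces the average of $\{U_N\}$ to be $o(1)$, hence $\liminf_N\{U_N\}=0$, which gives (A) at once. The substance of the argument is the complementary dense regime, which is already present for the sequence $a_n\equiv1$ treated by Erd\H{o}s: there $\tfrac1X\sum_{n\le X}d(n)a_n$ does not tend to $0$, the numbers $\{U_N\}$ no longer have small average, and one must instead follow Erd\H{o}s' analysis of the carries arising in the base-$b$ addition of the terms $d(n)a_n b^{-n}$ to exhibit, for each $k$, positions $N$ at which $k$ consecutive carry-digits vanish. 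This carry analysis is the main obstacle, and it is exactly the step that is unchanged from Erd\H{o}s' proof: because $\mathcal{A}$ is a fixed finite set, inserting the bounded weights $a_n$ affects none of his estimates on the average order of $d$, so his construction of long blocks of $0$'s carries over verbatim.

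Finally, (B) must rule out genuine termination $\alpha=K/b^M$. If this held, then $\{b^N\alpha\}=0$, i.e. $U_N\in\mathbb{Z}$, for every $N\ge M$; moreover $U_N\ne0$, since $U_N=0$ would force $a_n=0$ for all $n>N$, contradicting the hypothesis that $\{a_n\}$ does not end in repeated $0$'s. Hence $U_N$ would be a positive integer for all $N\ge M$, and the identity $d(N+1)a_{N+1}=bU_N-U_{N+1}$ would then realise the integers $d(N+1)a_{N+1}$ through a carry recursion emitting only the digit $0$, which is incompatible with the irregular growth of $d$ along the infinite set $\{n:a_n\ne0\}$; this is precisely the non-termination clause of Erd\H{o}s' method. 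Once (A) and (B) are in hand, each zero block produced in (A) satisfies $\{U_N\}>0$ by (B) and so is followed by a nonzero digit, and the periodicity criterion yields the irrationality of $\alpha$.
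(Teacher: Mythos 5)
Your overall architecture (arbitrarily long zero blocks, plus non-termination, plus the eventual-periodicity criterion) is the same as Erd\H{o}s's and the paper's. But your step (B) contains a genuine gap, and it is the step on which everything else leans. Assuming $\alpha=K/b^M$, you correctly deduce that $U_N$ is a positive integer for $N\ge M$ and that $d(N+1)a_{N+1}=bU_N-U_{N+1}$; but this recursion is an identity satisfied by \emph{any} sequence of integer tails, and the claim that it is ``incompatible with the irregular growth of $d$'' is not an argument --- the recursion places no constraint whatsoever on the values $d(N+1)a_{N+1}$, so no contradiction can be extracted from it alone. The way non-termination is actually obtained, both by Erd\H{o}s and in the paper, is quantitative and comes out of the same construction as (A): one produces $N$ (of the form $r+m_0A-1$) such that $b^N\alpha$ equals an integer plus the genuinely small scaled tail $\sum_{n>N+k}d(n)a_nb^{N-n}$, because the intermediate terms $n=N+1+j$ satisfy $b^{j+1}\mid d(n)$ and hence contribute integers after scaling. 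That tail is simultaneously smaller than $b^{-k/2+O(1)}$ and, crucially, \emph{strictly positive}, since the $a_n$ are non-negative and do not end on repeated zeros. Hence $0<\{b^N\alpha\}<b^{-k/2+O(1)}$ for arbitrarily large $N$, which yields the long zero block and the subsequent nonzero digit in one stroke and rules out $\alpha=K/b^M$ for every $M$. In short, (B) must be extracted from the strict positivity hidden inside (A); it cannot be established separately by a growth heuristic.

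Secondarily, your treatment of (A) in the ``dense regime'' defers to a black box that you describe inaccurately: Erd\H{o}s's construction is not an analysis of carries in base-$b$ addition, but a Chinese-remainder construction of a residue $r$ modulo $A$ (with $A$ a product of $k(k-1)/2$ distinct prime powers $p_i^b$) forcing $b^{j+1}\mid d(r+mA+j)$ for $0\le j<k$, combined with an upper bound (Lemma 2.2 of the paper) on the number of $m$ for which the far tail exceeds $b^{-(r+k/2+mA)}$. You are right that bounded integer weights $a_n$ pass through this argument essentially unchanged (integrality of the middle terms is preserved because $a_n\in\mathbb{Z}$, and the tail bound only changes by the factor $\max_{a\in\mathcal{A}}|a|$), and the paper likewise defers Theorem 1.1 to Erd\H{o}s with exactly that remark; but as written your proposal neither reproduces the construction nor isolates the feature of it --- the exact identification of the fractional part with a positive tail --- on which your step (B) secretly depends.
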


Theorem \ref{thm:erdos} has the following curious corollary.  Let $a_n(x)$ be the $n$th base $b$ digit of a number $x$ in $(0,1)$.  (If $x$ has two base $b$ expansions, then we chose the one which does not end on repeated $0$'s.)  Then the map
\[
x=\sum_{n=1}^\infty \frac{a_n(x)}{b^n} \longmapsto \sum_{n=1}^\infty  d(n) \frac{a_n(x)}{b^n}
\]
has its image in $\mathbb{R}\setminus \mathbb{Q}$ and is also continuous at all $x$ that do not have a representation as a finite base $b$ expansion.

We could replace the condition that $a_n$ be in the finite set $\mathcal{A}$ with a restriction that $0\le a_n \le \phi(n)$ for some sufficiently slowly growing integer-valued function $\phi$.  It would be interesting to know what the fastest growing $\phi$ for which the Theorem \ref{thm:erdos} holds would be.

In this paper, we will prove the following extension of Theorem \ref{thm:erdos}. 

\begin{thm}\label{thm:main}
Let $b>1$ be a positive integer and $\mathcal{A}$ be any finite set of integers \emph{that does not contain} $0$.  Then for any sequence of $\{a_n\}_{n=1}^\infty$ taking values in $\mathcal{A}$, we have that
\[
\sum_{n=1}^\infty d(n)\frac{a_n}{b^n}
\]
is irrational.
\end{thm}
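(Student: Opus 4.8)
The plan is to run Erd\H{o}s's base-$b$ digit argument, but to separate the two obstructions and treat them independently: producing arbitrarily long blocks of the digit $0$, and ruling out that the expansion terminates. Write $\alpha=\sum_{n\ge1}c_n b^{-n}$ with $c_n=d(n)a_n$, and for each $K$ set $\rho_K=\sum_{j\ge1}c_{K+j}b^{-j}$, the normalized tail, so that
\[
b^K\alpha=\Bigl(\sum_{n\le K}c_n b^{K-n}\Bigr)+\rho_K ,
\]
where the parenthesized term is an integer. Thus the base-$b$ digits of $\alpha$ beyond position $K$ are exactly the digits of the fractional part $\{\rho_K\}$, and $\alpha$ terminates in base $b$ if and only if $\rho_K\in\mathbb{Z}$ for some (equivalently, all large) $K$. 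The tails satisfy the recursion $b\rho_{K-1}=c_K+\rho_K$, and since $d(n)=n^{o(1)}$ while $|a_n|$ is bounded, one gets the uniform size bound $|\rho_K|\le K^{o(1)}$.

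Assume for contradiction that $\alpha\in\mathbb{Q}$, so its base-$b$ expansion is eventually periodic, say with period $T$ and some preperiod. The first step is to show the expansion contains a run of $0$'s of every length. I would obtain this exactly as in Erd\H{o}s's argument and in the proof of Theorem~\ref{thm:erdos}: the construction of the zero blocks uses only that $d$ is the divisor-counting function and that the weights $a_n$ are bounded, and it is insensitive to their signs (the one change is that the signed weights $c_n$ force us to allow borrows alongside carries). Granting this, a block of $T+\text{(preperiod)}$ consecutive $0$'s contains $T$ consecutive $0$'s lying entirely in the periodic region, which forces the repeating block to be identically $0$; hence $\alpha$ terminates, i.e. $\rho_K\in\mathbb{Z}$ for all large $K$.

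It remains to derive a contradiction from $\rho_K\in\mathbb{Z}$, and this is the heart of the matter and precisely the step Erd\H{o}s left unproven. Even in the unsigned case $c_n=d(n)>0$, where $\rho_K>0$, positivity alone does not suffice, and in our setting the signs in $c_n$ permit genuine cancellation, so the tail could \emph{a priori} be an integer through a conspiracy among the $a_n$. The leverage available is arithmetic rather than metric: reducing the recursion modulo $b$ gives $\rho_n\equiv -c_n=-d(n)a_n\pmod b$ for every large $n$, and solving it forward from an index $K_0$ yields
\[
\rho_n=b^{\,n-K_0}\rho_{K_0}-\sum_{i=0}^{\,n-K_0-1}c_{n-i}\,b^{i}.
\]
Here the hypothesis $0\notin\mathcal{A}$ enters decisively, since $|c_n|\ge d(n)\ge1$ for every $n$, so no tail is killed for a trivial reason; one must instead exploit the fine distribution of $d(n)$ -- for example $d(p)=2$ at primes and $d(p^k)=k+1$ at prime powers -- to show that the congruences $\rho_n\equiv-d(n)a_n\pmod b$ over a suitable interval are incompatible with both the integrality of the $\rho_n$ and the size bound $|\rho_n|\le n^{o(1)}$.

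I expect this last step to be the main obstacle, and the difficulty is created by the signs. For small $b$ (already $b=2$) one cannot trap a single tail $\rho_K$ strictly between consecutive integers, because $|c_n|\ge1$ forces $\rho_K$ to be too large; the contradiction must therefore come from playing many indices against one another. A workable route is to fix a large prime $p$, use $c_p=2a_p$ with $a_p$ ranging over the finite set $\mathcal{A}$ to pin $\rho_p$ modulo $b$, and then propagate the recursion across the block up to the next prime or prime power, where the controlled jump in $d$ forces $|\rho|$ either to grow geometrically -- contradicting $|\rho_n|\le n^{o(1)}$ -- or to violate one of the residue constraints. Making this quantitative, and in particular choosing the interval so that the accumulated carries and borrows cannot absorb the growth, is where I expect the real work to lie.
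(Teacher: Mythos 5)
Your reduction is sound as far as it goes: the zero-block construction from Erd\H{o}s (and from Theorem~\ref{thm:erdos}) does survive the signed weights, and a run of $T+(\text{preperiod})$ zeros does force a purportedly periodic expansion to terminate. But your step (2) --- ruling out termination --- is not a proof; it is an accurate description of the obstacle followed by a speculative sketch that you yourself flag as ``where I expect the real work to lie.'' The congruences $\rho_n\equiv -d(n)a_n\pmod b$ together with the bound $|\rho_n|\le n^{o(1)}$ do not obviously produce a contradiction, and you give no mechanism for the claimed dichotomy (geometric growth of $|\rho|$ versus violated residue constraints) when propagating across a block between consecutive primes. As written, the proof has a genuine gap at exactly the step Erd\H{o}s left open.

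The paper's resolution is different in structure and worth contrasting with your plan: rather than proving ``long zero runs'' and ``no termination'' as two independent facts, it merges them. In the Chinese-remainder construction that forces $b^{j+1}\mid d(r+m A+j)$ across a block of length $k$, one offset $j_0$ is deliberately omitted from the congruence system, and the Alford--Granville--Pomerance theorem on primes in arithmetic progressions (Proposition~\ref{prop:agb}, with its control on exceptional moduli, which is why $A$ is built to avoid the set $\mathcal{D}(N)$) is used to choose $m_0$ so that $r+j_0+m_0A$ is \emph{prime}. Then $d(r+j_0+m_0A)=2$, and the hypothesis $0\notin\mathcal{A}$ makes the single term $2a_{r+j_0+m_0A}/b^{r+j_0+m_0A}$ a guaranteed non-zero contribution whose digits sit just before the zero run (the choice of $j_0$ with $2\max|a|<b^{j_0}$ confines it there). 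This yields, arbitrarily far out, a non-zero digit immediately followed by $\approx k/2$ zeros, which contradicts eventual periodicity directly --- no separate termination analysis, no tail recursion, and no need for the modular bookkeeping you propose. Your instinct to ``fix a large prime $p$ and use $c_p=2a_p$'' is pointing at the right object, but the missing idea is to plant that prime \emph{inside} the zero block via the arithmetic-progression theorem, rather than to hunt for it afterwards in the recursion for $\rho_n$.
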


The new ingredient to extend Erd\H{o}s' method is finding arbitrarily long strings of zeros \emph{that are known to be preceded by a non-zero number}, and to find these strings arbitrarily far into the base $|b|$ expansion.

In particular, by taking $a_n=(-1)^n$, this proves that $f(1/b)$ is irrational for negative integers $b<1$ as well, completing Erd\H{o}s' proof.

\section{Proof of Theorem \ref{thm:main}}

We will require a result mentioned by Alford, Granville, and Pomerance \cite[p.~705]{agp}.  The function $\pi(N;d,a)$ equals the number of primes up to $N$ that are congruent to $a$ modulo $d$.

\begin{prop}\label{prop:agb}
Let $0<\delta<5/12$.  Then there exist positive integers $N_0$ and $\overline{\mathcal{D}}$ dependent only on $\delta$, such that the bound
\[
\pi(N;d,a) \ge \frac{N}{2\varphi(d) \log N}
\]
holds for all $N>N_0$; all moduli $d$ with $1 \le d \le N^\delta$, except, possibly for those $d$ that are multiples of some element in $\mathcal{D}(N)$, a set of at most $\overline{\mathcal{D}}$ different integers that all exceed $\log N$; and all a relatively prime to $d$.
\end{prop}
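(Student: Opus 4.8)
The plan is to recast the desired lower bound on $\pi(N;d,a)$ as an upper bound on the error term in the prime number theorem for the progression $a \bmod d$. Passing to the Chebyshev weighting $\psi(N;d,a)=\sum_{n\le N,\ n\equiv a\ (d)}\Lambda(n)$ and expanding in Dirichlet characters gives
\[
\psi(N;d,a)=\frac{1}{\varphi(d)}\sum_{\chi\bmod d}\overline{\chi}(a)\,\psi(N,\chi),
\]
where the principal character produces the main term $N/\varphi(d)$ and each remaining $\psi(N,\chi)$ is expressed through the explicit formula as a sum over the nontrivial zeros $\rho=\beta+i\gamma$ of $L(s,\chi)$. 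Since the expected main term of $\pi(N;d,a)$ is $\operatorname{Li}(N)/\varphi(d)\sim N/(\varphi(d)\log N)$, it suffices to show that the combined contribution of these zeros is at most half of that quantity for every admissible modulus.

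For the smallest moduli this is classical: when $d\le(\log N)^{A}$ the Siegel--Walfisz theorem bounds the error by $O_{A}\!\big(N\exp(-c\sqrt{\log N})\big)$, which is dominated by $N/(2\varphi(d)\log N)$ once $N$ exceeds a threshold $N_0=N_0(\delta)$. The real content is the range $(\log N)^{A}<d\le N^{\delta}$ with $\delta<5/12$, still below the Bombieri--Vinogradov level $1/2$. Here I would invoke a log-free zero-density estimate of Linnik type for the zeros of $L(s,\chi)$ with $\chi\bmod d$ and $d\le N^{\delta}$, combined with the large sieve, to conclude that for all but a sparse set of moduli every such $L$-function is free of zeros within $O(1/\log N)$ of the line $\sigma=1$ up to height $|\gamma|\le N$; for those $d$ the zero sum is negligible against the main term.

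The precise shape of the exceptional set is forced by exceptional (Siegel) zeros. A real primitive character of conductor $q$ whose $L$-function has a zero abnormally close to $s=1$ induces, for every multiple $d$ of $q$, a character modulo $d$ inheriting the same bad zero, and the corresponding term in the explicit formula can make $\pi(N;d,a)$ deficient exactly when $\chi(a)=1$. This is why the excluded moduli appear as the multiples of a small collection $\mathcal{D}(N)$ of conductors. A Deuring--Heilbronn repulsion argument, together with the sparsity of exceptional conductors, bounds their number by a constant $\overline{\mathcal{D}}=\overline{\mathcal{D}}(\delta)$; and conductors not exceeding $\log N$ can be discarded because for them the Siegel--Walfisz range already guarantees good distribution, so every element of $\mathcal{D}(N)$ exceeds $\log N$, as claimed.

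The hard part, and the reason the statement must tolerate an exceptional set rather than hold for all moduli, is the Siegel zero itself: its possible existence cannot be excluded effectively, since Siegel's bound $1-\beta\gg_{\varepsilon}q^{-\varepsilon}$ is ineffective, and this ineffectivity passes to both $N_0$ and the description of $\mathcal{D}(N)$. Assembling the zero-density and repulsion inputs with the bookkeeping needed to extract the uniform constants $N_0$ and $\overline{\mathcal{D}}$ is carried out in detail by Alford, Granville, and Pomerance, and I would cite their treatment for the remaining quantitative work.
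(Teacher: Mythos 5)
The paper does not prove this proposition at all: it is stated as a result quoted from Alford, Granville, and Pomerance, and your proposal likewise defers to their paper for all of the quantitative work, so you are in effect taking the same route. Your surrounding sketch of the machinery behind it (character decomposition and the explicit formula, Siegel--Walfisz for small moduli, zero-density input for $d\le N^{\delta}$ with $\delta<5/12$, and exceptional conductors forcing the ``multiples of elements of $\mathcal{D}(N)$'' shape of the excluded set) is a broadly accurate account of how that result is actually established.
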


We begin our proof much as Erd\H{o}s did his.  Let $b \ge 2$ be a fixed positive integer, let $\mathcal{A}$ be a finite set of integers that does not contain $0$, and let $N$ be a large positive integer that is allowed to vary.  Define $k$ in terms of $N$ by 
\[
k=k(N):=\lfloor \left( \log{N} \right)^{1/10} \rfloor.
\]
Let $j_0$ be a fixed integer, independent of $N$, so that $2 \max_{a\in \mathcal{A}} |a| / b^{j_0} <1$.

Let $0<\delta<5/12$ be some sufficiently small fixed constant, and let $N_0=N_0(\delta)$ and $\overline{\mathcal{D}}=\overline{\mathcal{D}}(\delta)$ be the corresponding constants from Proposition \ref{prop:agb}.  Let $N_1 >N_0$ be large enough so that for any $N> N_1$, the interval $((\log N)^2, 2 (\log N)^2)$ cotains at least $u+\overline{\mathcal{D}}$ primes, where $u=u(N)=k(k-1)/2$.  In addition, for such $N>N_1$, let $\mathcal{D}(N)$ be the set of exceptional moduli from Proposition \ref{prop:agb}.  Since we assume that $\delta$ is constant, $|\mathcal{D}(N)|\le \overline{\mathcal{D}}$ is bounded.  

For each $D$ in $\mathcal{D}(N)$, let $\tilde{p}_D$ denote the smallest prime strictly greater than $(\log N)^2$ that divides $D$, if such a prime exists, and then let $p_1< p_2< \dots <p_u$ be the smallest $u$ primes strictly greater than $(\log N)^2$ that are not equal to $\tilde{p}_D$ for any $D\in \mathcal{D}(N)$; by assumption on $N$, we have that each such $p_i$ is less than $2(\log N)^2$.  Finally, let
\[
A:= \prod_{i=1}^{ j_0(j_0-1)/2}p_i^b \prod_{i=j_0(j_0+1)/2+1 }^{ u} p_i^b,
\]
so that, in particular, $A$ is not a multiple of any $D$ in $\mathcal{D}(N)$; moreover, provided $N$ is sufficiently large, we have
\[
A<(2(\log N)^2)^{bk(k-1)/2} \le N^\delta.
\]

  By the Chinese remainder theorem, there exists an integer $r$, with $0 \le r \le A-1$, such that 
\[
r+j\equiv \prod_{i=j(j-1)/2+1}^{j(j+1)/2} p_i^{b-1} \pmod{\prod_{i=j(j-1)/2+1}^{j(j+1)/2} p_i^{b}}, \qquad 0 \le j \le k-1, j\neq j_0.
\]
(The exception $j\neq j_0$ marks the key difference between this proof and Erd\H{o}s'.)  Since all the $p_i$'s are bounded below by $(\log N)^2$, we have that $r$ necessarily tends to infinity as $N$ does, although possibly much slower.

With this value of $r$, for any integer of the form 
\[
r+mA, \qquad 0 \le m <\lfloor N/A \rfloor,
\]
we have that
\begin{equation}\label{eq:congruence}
d(r+mA+j) \equiv 0 \pmod{b^{j+1}}, \qquad 0 \le j <k,\ j \neq j_0
\end{equation}
by the multiplicity of $d(\cdot)$.  Moreover, $r+j_0$ is relatively prime to $A$, since each $p$ dividing $A$ also divides some $r+j$, with $0 \le j <k,$ $j \neq j_0$; the largest $j$ can be is $k\le (\log N)^{1/10}$, but all primes dividing $A$ are at least $(\log N)^2$.

We can also apply Proposition \ref{prop:agb} to see that
\begin{equation}\label{eq:agbcor}
\pi (N,A,r+j_0)\ge \frac{N}{2\varphi(A)\log N}.
\end{equation}

Erd\H{o}s in \cite{edros1} also proved the following result, which we give here without reproof.  (While our construction of $A$ and $r$ are different from Erd\H{o}s', they satisfy all the requirements for Erd\H{o}s' proof technique to still hold.)

\begin{lem}\label{lem:erdos}
With $A$, $r$, $b$, and $k$ all as above, the number of $m < \lfloor N/A \rfloor$ such that 
\[
\sum_{n> r+k+mA} d(n)\frac{1}{b^n} > \frac{1}{b^{r+k/2+mA}}
\]
is less than 
\[
\frac{10cN(\log{N})^2}{A2^{k/4}}
\]
for some constant $c$ independent of all variables.
\end{lem}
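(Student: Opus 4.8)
The plan is to count the ``bad'' values of $m$ by a first-moment (Markov) argument, after rewriting the tail condition as a statement about a weighted divisor sum. Factoring $b^{-(r+k+mA)}$ out of the tail and substituting $n=r+k+mA+t$, the inequality $\sum_{n>r+k+mA} d(n)\,b^{-n} > b^{-(r+k/2+mA)}$ becomes exactly
\[
W_m := \sum_{t=1}^\infty d(r+k+mA+t)\, b^{-t} > b^{k/2}.
\]
Thus $m$ is bad precisely when $W_m > b^{k/2}$, and by Markov's inequality the number of bad $m$ is at most $b^{-k/2}\sum_{m<\lfloor N/A\rfloor} W_m$. Interchanging summation gives
\[
\sum_{m<\lfloor N/A\rfloor} W_m = \sum_{t=1}^\infty b^{-t}\sum_{m<\lfloor N/A\rfloor} d(r+k+mA+t),
\]
so everything reduces to bounding the divisor function summed along the arithmetic progression $n\equiv r+k+t \pmod{A}$, which has about $N/A$ terms with largest element of size $N+O(t)$.

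For that inner sum I would use the elementary bound $d(n)\le 2\sum_{e\mid n,\,e\le\sqrt n}1$ and swap the roles of $n$ and its small divisor $e$: counting, for each $e\le\sqrt X$, the multiples of $e$ lying in the progression up to $X=N+O(t)$ yields
\[
\sum_{\substack{n\le X\\ n\equiv c\,(\mathrm{mod}\ A)}} d(n) \ll \frac{X}{A}\sum_{e\le\sqrt X}\frac{\gcd(e,A)}{e} + \sqrt X .
\]
The crucial point is that the harmonic-type sum does \emph{not} grow like $d(A)$: writing $\gcd(e,A)=g$ and grouping by $g\mid A$, the terms with $g>1$ contribute at most $(\log X)\bigl(\prod_{p\mid A}(1+1/p+\cdots)-1\bigr)$, and since every prime dividing $A$ exceeds $(\log N)^2$ while $A$ has only $O(k^2)=O((\log N)^{1/5})$ prime factors, this product is $1+o(1)$. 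Hence $\sum_{e\le\sqrt X}\gcd(e,A)/e \ll \log X$, giving the clean estimate $\tfrac{X}{A}\log X + \sqrt X$ uniformly in the residue $c$.

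Summing over $t$ against the geometric weight $b^{-t}$ (the terms with $t>N$ being negligible by the decay of $b^{-t}$, and the $\sqrt X$ contribution being dominated since $\sqrt N\ll N/A$ because $A\le N^\delta$ with $\delta<1/2$) then produces $\sum_m W_m \ll \tfrac{N\log N}{A}$. Therefore the number of bad $m$ is $\ll b^{-k/2}\tfrac{N\log N}{A}\le \tfrac{N(\log N)^2}{A\,2^{k/4}}$ after using $b\ge 2$ and absorbing constants into $c$, which is the claimed bound. The main obstacle is precisely this divisor-sum-in-progressions estimate: a careless appeal to a general bound introduces a factor of $d(A)$, which is $N^{o(1)}$ and hence far larger than $2^{k/4}$, so the argument genuinely relies on $A$ being built from large prime factors in order to keep $\sum_{g\mid A,\,g>1}1/g$ negligible.
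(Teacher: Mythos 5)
Your overall strategy---rewrite the tail condition as $W_m>b^{k/2}$, apply Markov's inequality to $\sum_m W_m$, and reduce everything to bounding $\sum_{n\le X,\ n\equiv c\pmod{A}}d(n)$---is the right one; the paper gives no proof of this lemma (it defers to Erd\H{o}s), and this first-moment argument is essentially his. You have also correctly identified the danger point: a careless divisor-sum bound loses a factor $d(A)$, which swamps $2^{k/4}$. But your way around it fails. The claim $\sum_{e\le\sqrt X}\gcd(e,A)/e\ll\log X$ is false for the $A$ of this paper: letting $e$ range only over the divisors of $A$ (all of which are $\le A\le N^\delta\le\sqrt X$) already gives
\[
\sum_{e\le\sqrt X}\frac{\gcd(e,A)}{e}\ \ge\ \sum_{d\mid A}\frac{d}{d}\ =\ d(A)\ =\ (b+1)^{\,u-j_0}\approx (b+1)^{k^2/2},
\]
which exceeds any power of $\log N$ and in particular exceeds $2^{k/4}$, so the loss you were trying to avoid reappears in full. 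The slip is in the grouping step: for $e$ with $\gcd(e,A)=g$ one has $g/e=1/f$ where $e=gf$, and $\sum_{f\le\sqrt X/g}1/f\approx\log X$ \emph{per divisor} $g$; you have bounded this by $(\log X)/g$, i.e.\ dropped the factor $g$ from the numerator, which is exactly what converts the fatal $\sum_{g\mid A,\ g>1}1=d(A)-1$ into the harmless $\sum_{g\mid A,\ g>1}1/g=o(1)$.

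The argument can be repaired, but it needs an input you never invoke: the system $n\equiv c\pmod{A}$, $e\mid n$ is solvable only when $\gcd(e,A)\mid c$, so the terms with $\gcd(e,A)=g>1$ contribute to $S_t:=\sum_{m}d(r+k+mA+t)$ only when $g\mid r+k+t$. By the CRT construction of $r$, every prime $p\mid A$ divides some $r+j$ with $0\le j\le k-1$; since $0<k+t-j<(\log N)^2<p$ whenever $1\le t<(\log N)^2-k$, no such $p$ can also divide $r+k+t$ in that range, so $\gcd(A,r+k+t)=1$ there and the inner sum is genuinely $\ll (X/A)\log X+\sqrt X$ with no $d(A)$ loss. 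The exceptional range $t\ge(\log N)^2-k$ is then annihilated by the geometric weight, since $b^{-t}\le b^{-(\log N)^2+k}$ crushes the worst-case factor $d(A)=e^{O((\log N)^{1/5})}$. With that correction your Markov step goes through and gives the stated bound; this coprimality of $A$ with the residues $r+k+t$ for small $t$ is presumably one of the ``requirements for Erd\H{o}s' proof technique'' that the paper asserts its modified $A$ and $r$ still satisfy.
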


Regardless of how large $c$ is, we have, for sufficently large $N$, that 
\[
\frac{N}{2\varphi(A)\log N} \ge \frac{10cN(\log{N})^2}{A2^{k/4}}.
\]
Therefore, by combining Lemma \ref{lem:erdos} with \eqref{eq:congruence} and \eqref{eq:agbcor}, we see that for sufficiently large $N$ there exists some $m_0 < \lfloor N/A\rfloor$ such that
\begin{equation}\label{eq:upper}
b^{j+1} | d(r+m_0 A+j) , \qquad 0 \le j <k, j \neq j_0,
\end{equation}
\begin{equation}\label{eq:middle}
r+m_0 A + j_0 \text{ is prime},
\end{equation}
and
\begin{equation}\label{eq:lower}
\sum_{n> r+k+m_0 A} \left| d(n) \frac{a_n }{b^n} \right| \le \frac{\max_{a\in \mathcal{A}} |a|}{b^{r+k/2+mA}}.
\end{equation}

Now consider a particular sequence $(a_n)$ with each $a_n \in \mathcal{A}$ together with the sum
\[
\sum_{n=1}^\infty d(n) \frac{a_n }{b^n} .
\]
By \eqref{eq:upper}, the partial sum
\[
\sum_{\substack{n \le r+k+m_0 A \\  n \neq r+j_0+m_0 A}} d(n) \frac{a_n }{b^n},
\]
when written in base $b$, has its last non-zero digit in the $(r-1+m_0A)$th place or earlier.\footnote{Here we switch back to the convention that finite expansions are assumed to end on an infinite string of zeros.}  In addition, by \eqref{eq:lower}, the partial sum
\[
\sum_{n > r+k+m_0 A} d(n) \frac{a_n }{b^n}
\]
when written in base $b$ has its first non-zero digit in the \[(r+k/2+m_0 A-\lceil \log_b \max_{a\in \mathcal{A}} |a| \rceil)\text{th}\] place or later.  The number
\[
d(r+j_0+m_0A) \frac{a_{r+j_0+m_0A} }{b^{r+j_0+m_0A}}=\frac{2a_{r+j_0+m_0A} }{b^{r+j_0+m_0A}}
\]
when written in base $b$ has its non-zero digits only between the $(r+m_0  A)$th and $(r+j_0+m_0 A)$th place, and it has at least one such non-zero digit.  Thus the full sum has a string of at least $k/2+O(1)$ zeroes immediately preceded by a non-zero digit starting somewhere between the $(r+m_0 A)$th and $(r+j_0+m_0 A)$th place.

So as $N$ increases to infinity, we can find arbitrarily long strings of $0$'s (which corresponds to $k$ increasing to infinity) immediately preceded by a non-zero digit, and we find these strings arbitrarily far out in the expansion (since $r$ also tends to infinity).  The base $b$ digits cannot therefore be periodic and hence the sum is irrational.  This completes the proof.

\section{Acknowledgements}

The author acknowledges support from National Science Foundation grant DMS 08-38434 ``EMSW21-MCTP: Research Experience for Graduate Students.''  The author would also like to thank Paul Pollack and Paul Spiegelhalter for their assistance.

\end{document}